\begin{document}
\title[The Twisted Ideals]{The Twisted Hilbert Space Ideals}
\author[Cabello \& Garc\'ia]{F\'{e}lix Cabello S\'{a}nchez and Ricardo Garc\'ia}
\thanks{Research supported in part by MINCIN Project PID2019-103961GB-C21 and Junta de Extremadura project IB20038.}

\thanks{2020 {\it Mathematics Subject Classification}: 47B10, 46M18}

\address{Departamento
de Matem\'{a}ticas and IMUEx, Universidad de Extremadura}
\address{Avenida de Elvas,
06071 Badajoz, Spain.} \email{fcabello@unex.es, rgarcia@unex.es}
%\date{}
\noindent {\footnotesize Version \today}

\bigskip

\bigskip

 \maketitle
\theoremstyle{plain}
\newtheorem{problem}{Problem}
\newtheorem{theorem}{Theorem}
\newtheorem{proposition}{Proposition}
\newtheorem{corollary}{Corollary}
\newtheorem{lemma}{Lemma}
\theoremstyle{remark}
\newtheorem{definition}{Definition}
\newtheorem{example}{Example}
\newtheorem{remark}{Remark}
\newtheorem*{question}{Question}

\newcommand{\R}{\mathbb{R}}
\newcommand{\N}{\mathbb{N}}
\newcommand{\K}{\mathbb{K}}
\newcommand{\e}{\varepsilon}

%%% Nuevos

\newcommand{\Hi}{\mathscr H}
\newcommand{\BH}{B(\mathscr H)}
\newcommand{\KH}{\mathcal K(\mathscr H)}
\newcommand{\LH}{\mathcal L(\mathscr H)}
\newcommand{\EH}{\mathcal E(\mathscr H)}
\newcommand{\FH}{\mathcal F(\mathscr H)}
\newcommand{\QH}{\mathcal Q(\mathscr H)}
\newcommand{\Loo}{\ensuremath{{L_\infty }}}

\newcommand{\EL}{\ensuremath{\Ext_{L^\infty}}}

\newcommand{\To}{\ensuremath{\longrightarrow}}

\def\Ext{\operatorname{Ext}}
\def\Hom{\operatorname{Hom}}
\def\PB{\operatorname{PB}}
\def\PO{\operatorname{PO}}
\def\El{\operatorname{Ext}}
\def\sign{\operatorname{sign}}
\def\dim{\operatorname{dim}}
\def\supp{\operatorname{supp}}
\def\co{\operatorname{co}}
\def\Bil{\operatorname{\mathfrak B}}
\def\Bal{\operatorname{Bal}}
\def\dist{\operatorname{dist}}
\def\M{\operatorname{M}}
\bibliographystyle{plain}

\begin{abstract}
We study those operators on a Hilbert space that can be lifted / extended to any twisted Hilbert space. We prove that these form an ideal of operators which contains all the Schatten classes. We characterize those multiplication operators on $\ell_p$ that are liftable / extensible through centralizers.
\end{abstract}
%\bigskip

\section{The object of study}

%\subsection{}
A twisted Hilbert space is a short exact sequence of (quasi) Banach spaces and operators
\begin{equation}\label{kth}
\begin{CD}
0@>>> \mathscr K @>\imath >> \mathscr T @>\pi>> \mathscr H @>>> 0
\end{CD}
\end{equation}
in which $\mathscr H$ and $\mathscr K$ are Hilbert spaces. Since ``exact'' means that the kernel of each arrow agrees with the image on the preceding, we see that $\imath$ is an isomorphic embedding and that $\pi$ induces an isomorphism between $\mathscr H$ and   $\mathscr T/\imath[\mathscr K]$. Thus, less technically, we can regard $\mathscr T$ as a (quasi) Banach space containing $ \mathscr K$ as a closed subspace in such a way that the corresponding quotient is  $\mathscr H$. This already implies that $\mathscr T$ is (isomorphic to) a Banach space, i.e., its quasinorm is equivalent to a convex norm; see \cite[Theorem 4.9 plus Theorem 4.10]{k} or \cite[Theorem 4.1 plus Theorem 6.2]{kaltconv}.

Being clear that $ \mathscr K$ is complemented in  $\mathscr T$ if and only if $\mathscr T$ is itself isomorphic to a Hilbert space (namely, to $ \mathscr K\oplus_2  \mathscr H$), the first thing one must know on the subject is that nontrivial twisted Hilbert spaces do exist. We refer the reader to \cite{elp, kp} for the early constructions and to \cite{cg}, \cite[Chapter 16]{bl} and the references therein for further developments. Readers who are not familiar with any of those references can stop here.

%\subsection{}
From now on all (quasi) Banach spaces are assumed to be complex and all Hilbert spaces separable. Considering nonseparable spaces would not lead to any new idea and would result in a number of irritating complications. 

\begin{definition}
An operator $f\in B(\mathscr H)$ is said to be liftable if for each twisted Hilbert space (\ref{kth}) there is an operator $F: \mathscr H \To \mathscr T$ such that $f=\pi\,F$. 

Analogously, $g\in B(\mathscr K)$ is said to be extensible if for each twisted Hilbert space (\ref{kth}) there is an operator $G: \mathscr T \To \mathscr K$ such that $G\,\imath=g$. 
\end{definition}

As the reader may guess, the key point of the definition is the quantifier ``for each''. In all what follows we fix a separable Hilbert space $\mathscr{H}$ and we write $\mathcal L(\mathscr H)$ and $\mathcal E(\mathscr H)$ for the set of liftable and extensible operators on $\mathscr H$ respectively.

\section{The ideal of liftable operators}
Let us begin with the elementary observation that operators in the Hilbert-Schmidt class $\mathcal S_2(\mathscr H)$ factorize through $\ell_1$ and, by the lifting property of $\ell_1$ and the fact that twisted Hilbert spaces are (isomorphic to) Banach spaces, are liftable:
$$
\xymatrixrowsep{1pc}
\xymatrixcolsep{3.5pc}
\xymatrix{
&& \mathscr T\ar[dd]^\pi\\
& \ell_1 \ar[ur]^{H} \ar[rd]^h\\
\mathscr H \ar[rr]^f \ar[ur]^g & & \mathscr H
}
$$
Recall that $\mathcal I\subset B(\mathscr H)$ is an ideal if $afb\in\mathcal I$ whenever $f\in\mathcal I$ and $a,b\in B(\mathscr H)$, in which case $\mathcal{I}$ is a linear subspace, closed under taking (Hilbert space) adjoints. By classical results of Calkin \cite[Theorem 1.6]{calkin}
{\em proper} ideals of $B(\mathscr H)$ are in correspondence with symmetric, solid linear subspaces of $c_0$: indeed, if $X\subset c_0$ is such, one can define the ideal $\mathcal I_X(\mathscr H)$ of those (necessarily compact) operators  whose singular numbers belong to $X$;
%and norm them by $\|u\|=\|(s_n)\|_X$. 
conversely, if $\mathcal I \subset B(\Hi)$ is a proper ideal and one fixes an orthonormal basis $(e_n)$ for $\mathscr H$, then the sequences $(s_n)$ for which the diagonal operator $\sum_n s_ne_n\otimes e_n$ belongs to $ \mathcal I$ form a symmetric, solid linear subspace of $c_0$.

\begin{lemma}\label{lem:isanideal} $\LH$ is an ideal of compact operators.
\end{lemma}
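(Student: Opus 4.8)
The plan is to establish, one after another, that $\LH$ is a linear subspace, that it absorbs multiplication on either side by operators in $\BH$, and that every liftable operator is compact. The first is immediate: if $F_1$ and $F_2$ lift $f_1$ and $f_2$ through a given twisted Hilbert space (\ref{kth}), then $\lambda F_1+\mu F_2$ lifts $\lambda f_1+\mu f_2$, so sums and scalar multiples of liftable operators are liftable.

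For the ideal property the device is the pullback. Fix $f\in\LH$, operators $a,b\in\BH$, and a twisted Hilbert space (\ref{kth}). Set
\[
\mathscr T_a:=\{(t,h)\in\mathscr T\oplus_2\mathscr H:\pi(t)=a(h)\},
\]
let $\pi_a(t,h)=h$ and $\alpha(t,h)=t$, and embed $\mathscr K$ into $\mathscr T_a$ via $k\mapsto(\imath(k),0)$. One checks readily that the image of $\mathscr K$ is exactly the kernel of $\pi_a$ and that $\pi_a$ is onto, so $0\to\mathscr K\to\mathscr T_a\to\mathscr H\to 0$ (with quotient map $\pi_a$) is again a twisted Hilbert space, while $\alpha$ satisfies $\pi\,\alpha=a\,\pi_a$. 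Since $f$ is liftable, it lifts through this new sequence: there is $F_a:\mathscr H\To\mathscr T_a$ with $\pi_a\,F_a=f$. Then $\alpha\,F_a\,b:\mathscr H\To\mathscr T$ satisfies $\pi\,(\alpha\,F_a\,b)=a\,\pi_a\,F_a\,b=afb$, so $afb\in\LH$. Together with linearity this proves $\LH$ is an ideal.

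For compactness I would argue by contradiction, using the classical fact that a non-compact operator on a Hilbert space factors the identity. Suppose $f\in\LH$ is not compact. Then $f^{*}f$ is a non-compact positive operator, so for some $\e>0$ its spectral projection $P$ onto $[\e,\|f\|^{2}]$ has infinite-dimensional range $\mathscr H_0:=P[\mathscr H]$, and $\|fx\|\ge\sqrt{\e}\,\|x\|$ for every $x\in\mathscr H_0$; hence $f$ maps $\mathscr H_0$ isomorphically onto the closed subspace $\mathscr H_1:=f[\mathscr H_0]$. Composing this restriction with the inclusion $\mathscr H_0\hookrightarrow\mathscr H$, the orthogonal projection of $\mathscr H$ onto $\mathscr H_1$, the inverse isomorphism $\mathscr H_1\to\mathscr H_0$, and a unitary $\mathscr H\to\mathscr H_0$, arranged in the right order, one produces $a,b\in\BH$ with $a\,f\,b=\mathrm{id}_{\mathscr H}$. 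By the ideal property just proved, $\mathrm{id}_{\mathscr H}\in\LH$. But a lifting of $\mathrm{id}_{\mathscr H}$ through a twisted Hilbert space (\ref{kth}) is nothing but a bounded linear right inverse of $\pi$, which would split every such sequence --- contradicting the existence of nontrivial twisted Hilbert spaces recalled in Section~1. Therefore $\LH\subset\KH$.

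The first two parts are essentially formal; the one structural ingredient is the pullback, and it is available precisely because pulling a twisted Hilbert space back along an operator on $\mathscr H$ yields another twisted Hilbert space. I expect the only genuine care to be needed in the compactness step: one must be sure the restriction of $f$ to the spectral subspace is bounded below --- so that its range is closed and the factorization of the identity is genuine --- and one must invoke the nontrivial twisted Hilbert spaces from the literature, since without them $\mathrm{id}_{\mathscr H}$ itself would be liftable and the statement would fail.
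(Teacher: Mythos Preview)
Your proof is correct and follows essentially the same strategy as the paper's: the pullback construction for the two-sided ideal property, and the non-liftability of the identity (via the existence of nontrivial twisted Hilbert spaces) for compactness. The only difference is in the compactness step: the paper simply observes that $\LH$ is a \emph{proper} ideal and invokes Calkin's theorem (recalled just before the lemma) that every proper ideal of $\BH$ consists of compact operators, whereas you reprove the relevant special case by hand, factoring the identity through any non-compact $f$ via a spectral subspace of $f^*f$. Your route is more self-contained; the paper's is shorter because the needed structural fact has already been stated.
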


\begin{proof}
Let us first remark that if $f\in\mathcal L(\mathscr H)$ and $a\in B(\mathscr H)$, then $fa$ is liftable for if $F$ is a lifting of $f$, then $Fa$ is a lifting of $fa$. To check that $af$ is liftable too, take any twisted Hilbert space as in (\ref{kth}) and form the pullback with $a$ to get the commutative diagram
\begin{equation*}%\label{kth}
\begin{CD}
0@>>> \mathscr K @>\imath >> \mathscr T @>\pi>> \mathscr H @>>> 0\\
 && @| @AA\overline{a}A @AAaA\\
0@>>> \mathscr K @> \overline{\imath}>> \PB @> \overline{\pi}>> \mathscr H @>>> 0
\end{CD}
\end{equation*}
Here, the {\em pullback} space is $\PB=\{(z,x)\in \mathscr T \times\mathscr H: \pi(z)=a(x)\}$ and the operators $\overline{a}, \overline{\pi}$ are the restrictions of the projections of  $\mathscr T \times\mathscr H$ onto $\mathscr T$ and $\mathscr H$ respectively. Finally, $\overline{\imath}(y)=(\imath(y),0)$ for $y\in \mathscr K$ and the lower sequence is exact.

Now, if $F$ is a lifting of $f$ to $\PB$ then $\overline{a}\, F$ is a lifting of $af$ to $\mathscr T$. To prove that every liftable operator is compact it suffices to see that $\LH$ is {proper}, which is obvious from the existence of nontrivial sequences (\ref{kth}): the identity of $\mathscr H$ cannot be lifted to $\mathscr T$.
\end{proof}

\begin{corollary}
An operator is liftable if and only if is extensible. 
\end{corollary}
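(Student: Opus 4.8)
The plan is to reduce the statement to Lemma~\ref{lem:isanideal} by a duality argument. First I would record two preliminary facts valid for every twisted Hilbert space (\ref{kth}). Since $\mathscr T$ is an extension of the reflexive space $\mathscr K$ by the reflexive space $\mathscr H$, it is reflexive (reflexivity being a three-space property); hence the dual sequence
\begin{equation*}
\begin{CD}
0@>>> \mathscr H^* @>\pi^* >> \mathscr T^* @>\imath^*>> \mathscr K^* @>>> 0
\end{CD}
\end{equation*}
is exact, and as the dual of a Hilbert space is a Hilbert space it is again a twisted Hilbert space. Moreover $\mathscr T^{**}=\mathscr T$ canonically, so every twisted Hilbert space is, up to isomorphism of short exact sequences, the dual of another one — and isomorphisms of short exact sequences plainly carry liftings to liftings and extensions to extensions.

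Next comes the elementary observation at the core of the argument: an operator $G:\mathscr T\to\mathscr K$ satisfies $G\,\imath=g$ if and only if $\imath^*G^*=g^*$; that is, $G$ extends $g$ through (\ref{kth}) precisely when $G^*$ lifts $g^*$ through the dual sequence, whose quotient map is $\imath^*$. Running this together with the first paragraph — using reflexivity again to realise an arbitrary twisted Hilbert space as a dual in one direction, and the surjectivity of the canonical maps $\mathscr T\to\mathscr T^{**}$ and $\mathscr K\to\mathscr K^{**}$ to bring an operator defined on the bidual back down in the other — yields the equivalence: \emph{$g$ is extensible if and only if $g^*$ is liftable}. Here $g\mapsto g^*$ is the adjoint, read as a bijection of $B(\mathscr H)$ after the customary identification of a Hilbert space with its dual; that the Riesz identification is conjugate-linear is the only wrinkle, but it alters $g$ merely by a coordinatewise conjugation, which — like the adjoint — preserves singular numbers and hence leaves the ideal $\LH$ invariant.

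The corollary now drops out. By Lemma~\ref{lem:isanideal}, $\LH$ is an ideal, hence stable under adjoints, so $g^*$ is liftable if and only if $g=g^{**}$ is liftable; combined with the previous equivalence this reads: $g$ extensible $\iff$ $g$ liftable, i.e.\ $\EH=\LH$ (which, incidentally, shows a posteriori that $\EH$ is an ideal as well).

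The part demanding genuine care — the only real obstacle — is the bookkeeping of the second paragraph: confirming that the dual sequence is honestly a twisted Hilbert space, that one recovers \emph{all} of them as duals (this is exactly where reflexivity of $\mathscr T$ is indispensable), and keeping track of the canonical embeddings, adjoints and Riesz maps so that ``extension of $g$ $\leftrightarrow$ lifting of $g^*$'' is a true equivalence rather than a single implication. Everything past that point is purely formal.
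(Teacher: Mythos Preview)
Your proof is correct and follows the same duality strategy as the paper: both argue that $g$ is extensible if and only if its Banach adjoint $g'$ is liftable (using that the dual of a twisted Hilbert sequence is again one, and---a point you make explicit via the three-space property for reflexivity---that every twisted Hilbert sequence arises as such a dual), and then close the loop by showing $g$ liftable iff $g'$ liftable.

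The only difference lies in this last step. The paper writes down a Schmidt expansion $f=\sum_n s_n\, y_n\otimes x_n$ of the (necessarily compact) operator and uses the singular bases to build \emph{linear} isometries $I,J:\Hi\to\Hi'$ with $If=f'J$, so that $f$ and $f'$ are intertwined by honest bounded operators and the two-sided ideal property of $\LH$ applies directly---no conjugation issue ever surfaces. You instead invoke the adjoint-closure of ideals together with Calkin's singular-number description of proper ideals to absorb the conjugation coming from the Riesz map. Both routes are valid; yours is a touch more structural (and makes the role of reflexivity visible), the paper's a touch more hands-on, but they are doing the same work.
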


\begin{proof}
As $\mathscr T$ is a Banach space elementary considerations 
on the relationships between short exact sequences of the form (\ref{kth}) and their (Banach space) adjoints
$$
\xymatrix{
0 \ar[r] & \mathscr H'\ar[r]^{\pi'} &  \mathscr T' \ar[r]^{\imath'} &  \mathscr K' \ar[r] & 0
}
$$
reveal that an operator is liftable if and only its Banach space adjoint is extensible, in particular $\EH$ is a proper ideal of $B(\mathscr H)$ for any Hilbert space $\mathscr H$. But a {\em compact} operator on a Hilbert space is liftable or extensible if and only if its Banach space adjoint is: let $f=\sum_{n}s_n y_n\otimes x_n$ be a Schmidt expansion of a compact operator on $\mathscr H$. We can use the fact that $(x_n)_{n\geq 1}$ is an orthonormal basis of $\Hi$ to define a surjective isometry $I:\Hi\To\Hi'$ sending $x_n$ to $\langle -|x_n\rangle$, that is, $\langle I(\xi), \eta\rangle=\sum_n \xi_n\eta_n$, where $\xi=\sum_n \xi_n x_n$ and $\eta=\sum_n \eta_n x_n$. The basis $(y_n)$ gives another isometry $J:\Hi\To\Hi'$ sending $y_n$ to $\langle -|y_n\rangle$. Since $f(y_n)=s_n x_n$ one has $f'( \langle -|x_n\rangle ) = s_n \langle -|y_n\rangle$, that is, the following square is commutative:
$$
\xymatrixcolsep{3.5pc}
\xymatrix{
\Hi \ar[r]^f \ar[d]_J & \Hi \ar[d]^I\\
\Hi' \ar[r]^{f'} & \Hi' 
}
$$
Nothing more to add.
\end{proof}

In order to gain a deeper understanding of $\LH$ we need the notion of a quasilinear map. A homogeneous mapping $\phi: X\To Y$ acting between quasinormed spaces is said to be quasilinear if it obeys an estimate
$$
\|\phi(x+y)-\phi(x)-\phi(y)\|\leq Q(\|x\|+\|y\|)
$$
for some constant $Q$ and all $x,y\in X$. The least constant for which the preceding inequiality holds is called the quasilinearity constant of $\phi$ and is denoted by $Q(\phi)$. The space of all quasilinear maps from $X$ to $Y$ is denoted by $\mathcal Q(X,Y)$, or just $\mathcal Q(X)$ when $Y=X$. 

Given a quasilinear map $\phi:X\To Y $ one can construct the (twisted sum) space  $Y\oplus_\phi X$ which is just the direct sum space $Y\oplus X$ equipped with the quasinorm $\|(y,x)\|_\phi=\|y-\phi(x)\|+\|x\|$. The map $y\longmapsto (y,0)$ is an isometric embedding of $Y$ into $Y\oplus_\phi X$ and the map $(y,x)\longmapsto x$ takes the unit ball of $Y\oplus_\phi X$ onto that of $X$ so that we have an exact sequence
\begin{equation}\label{eq:yyfxx}
\begin{CD}
0@>>> Y @>\imath >> Y\oplus_\phi X @>\pi>> X @>>> 0
\end{CD}
\end{equation}
called, with good reason, the sequence generated by $\phi$. The preceding considerations imply that $Y\oplus_\phi X$ is complete (that is, a quasi Banach space) if $X$ and $Y$ are. 
The relevant point of this discussion is that any exact sequence of quasi Banach spaces
$$
\begin{CD}
0@>>> Y @>\jmath >> Z @>\varpi>> X @>>> 0
\end{CD}
$$
is {\em equivalent} to one of the form (\ref{eq:yyfxx}) in the sense that there is a commutative diagram
$$
\xymatrixrowsep{1pc}
\xymatrixcolsep{3pc}
\xymatrix{
&& Z \ar[dd]^u \ar[rd]^\varpi &&\\
0 \ar[r] & Y \ar[ur]^{\jmath} \ar[dr]_{\imath} & & X \ar[r] & 0\\ 
&& Y\oplus_\phi X \ar[ur]_{\pi} &&
}
$$
in which $u$ is an isomorphism.

Now suppose we intend to lift a given operator $f:V\To X$ to $Y\oplus_\phi X $. Each lifting $F:V\To Y\oplus_\phi X$ has the form $F(v)=(L(v), f(v))$, where $L:V\To Y$ is a linear map, not necessarily continuous. Since
$
\|F(v)\|_\phi = \|L(v)-\phi(f(v))\|+\|f(v)\|
$
 we see that $F$
is bounded if and only if there is a constant $K$ such that
\begin{equation}\label{eq:L-phi u}
\|L(v)-\phi(f(v))\|\leq K\|v\|
\end{equation}
for all $v\in V$. From now on we shall use the following notation: given  a homogeneous map $h:X\To Y$, acting between quasinormed spaces, we put
$$
\|h\|=\|h:X\To Y\|= \sup_{\|x\|\leq 1}\|h(x)\|
$$
and we say that $h$ is bounded if $\|h\|<\infty$. In this way the inequality (\ref{eq:L-phi u}) means $\|L-\phi\circ f\|\leq K$.
Similarly, an operator $f:Y\To V$ extends to $Y\oplus_\phi X $ if and only if there is a linear map $L:X\To V$ such that $\|L-f\circ\phi\|<\infty$.
Since all separable Hilbert spaces are isometrically isomorphic and all twisted Hilbert spaces arise from quasilinear maps we have proved:

\begin{lemma}\label{lem:lif-char}
For an operator $f\in B(\mathscr H)$ the following conditions are equivalent:
\begin{itemize}
\item $f$ is liftable or extensible.
%\item $f$ is 
\item For every $\phi\in\mathcal Q(\mathscr H)$ there is a linear map $L$ on $\mathscr H$ such that $\|L-\phi\circ f\|<\infty$.
\item For every $\phi\in\mathcal Q(\mathscr H)$ there is a linear map $L$ on $\mathscr H$ such that $\|L-f\circ	\phi\|<\infty$.
%\item There is a constant $K$ such that, for every bounded $\Phi\in\mathcal Q(\mathscr H)$ there is $L\in B(\mathscr H)$ such that $\|L-\Phi\circ u\|\leq KQ[\Phi]$.
\end{itemize}
\end{lemma}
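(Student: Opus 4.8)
The plan is to prove Lemma \ref{lem:lif-char} by unwinding the two reductions already spelled out in the preceding discussion and then invoking the fact that every twisted Hilbert space is equivalent to one generated by a quasilinear map. First I would record the correspondence between liftings/extensions of an operator along a sequence of the form (\ref{eq:yyfxx}) and approximate linearisations: given $\phi\in\mathcal Q(\mathscr H)$, the operator $f$ lifts to $\mathscr H\oplus_\phi\mathscr H$ precisely when there is a (possibly discontinuous) linear $L$ on $\mathscr H$ with $\|L-\phi\circ f\|<\infty$, and $f$ extends along the same sequence precisely when there is a linear $L$ with $\|L-f\circ\phi\|<\infty$. Both equivalences are exactly the computation displayed before the statement, so this step is immediate.

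Next I would upgrade from a single sequence of the special form (\ref{eq:yyfxx}) to an arbitrary twisted Hilbert space (\ref{kth}). Here I use two facts: every short exact sequence of quasi Banach spaces is equivalent to one generated by a quasilinear map (the commutative diagram with the isomorphism $u$ recalled above), and any pair of separable infinite-dimensional Hilbert spaces is isometrically isomorphic, so we may assume $\mathscr K=\mathscr H$; the finite-dimensional and finite-codimensional cases are trivial since then the sequence splits. Liftability and extensibility are plainly invariant under replacing a sequence by an equivalent one and under composing $\imath,\pi$ with surjective isometries, so $f$ is liftable (resp.\ extensible) through \emph{every} twisted Hilbert space if and only if it lifts (resp.\ extends) through $\mathscr H\oplus_\phi\mathscr H$ for every $\phi\in\mathcal Q(\mathscr H)$. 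Combining with the first step gives the equivalence of the first bullet with the second (for liftability) and with the third (for extensibility).

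It remains to close the loop between liftability and extensibility, i.e.\ between the second and third bullets; but this is already furnished by the Corollary, since a liftable or extensible operator is compact by Lemma \ref{lem:isanideal}, and for a compact operator liftability, extensibility, and the corresponding properties of its Banach-space adjoint all coincide. Alternatively one can see the symmetry directly at the level of quasilinear maps: if $\phi\in\mathcal Q(\mathscr H)$ then the adjoint sequence of (\ref{eq:yyfxx}) is again a twisted Hilbert space, generated by a quasilinear map $\phi^{*}$ on $\mathscr H'\cong\mathscr H$, and $L-\phi\circ f$ is bounded iff a suitable transpose of $L$ approximates $f'\circ\phi^{*}$; running this through the isometric identifications of the Corollary turns the second bullet into the third.

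I expect the only genuine subtlety to be bookkeeping rather than mathematics: one must be careful that the reduction ``assume $\mathscr K=\mathscr H$'' really does not lose generality (handle the degenerate cases where $\mathscr K$ or $\mathscr H$ is finite-dimensional separately, noting the sequence then splits and every operator is liftable/extensible), and that passing to an equivalent sequence genuinely preserves the \emph{existence} of a lifting with no loss in the relevant constants. None of this is hard, and since the paper has already done the core computation relating $F=(L,f)$ to the inequality $\|L-\phi\circ f\|<\infty$, the proof can be as short as ``this is immediate from the preceding discussion together with the Corollary.''
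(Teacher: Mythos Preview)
Your proposal is correct and follows essentially the same route as the paper: the paper's entire proof is the single sentence ``Since all separable Hilbert spaces are isometrically isomorphic and all twisted Hilbert spaces arise from quasilinear maps we have proved,'' relying on the preceding computation relating liftings/extensions of $f$ through $\mathscr H\oplus_\phi\mathscr H$ to the boundedness of $L-\phi\circ f$ (resp.\ $L-f\circ\phi$), together with the Corollary identifying liftable and extensible operators. Your write-up is simply a more explicit unpacking of that same argument, and your care about the degenerate finite-dimensional cases and about invariance under equivalence of sequences is appropriate bookkeeping that the paper leaves implicit.
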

%Let $f$ be a compact operator on the Hilbert space $\Hi$. The singular numbers of $f$ are the sequence
%of eigenvalues of $|f| = (f^{*}f)^{1/2}$ arranged in decreasing order and counting multiplicity.
Let $p\in(0,\infty)$. The Schatten
class $\mathcal{S}_p$ consists of those operators on $\Hi$ whose sequence of singular numbers $(s_n(f))$ belongs to $\ell_p$. It is a quasi Banach
space under the quasinorm $\|f\|_p= |(s_n(f))|_p=\big(\sum_n |(s_n(f))|^p\big)^{1/p}$, and a Banach space if $1\leq p < \infty$.
%Each f 2 Sp has an expansion f =Pn snxn yn, where (sn) are its singular numbers and (xn) and (yn) are orthonormal sequences in H. This is called an Schmidt representation of f.

%\subsection{}

\medskip

We now prove that $\LH$ is a quite large ideal:

\begin{proposition}\label{prop:quitelarge}
$\mathcal{S}_p\subset \LH$ for every finite $p$.
\end{proposition}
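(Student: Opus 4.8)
The plan is to distinguish the cases $p\le 2$ and $p>2$. For $p\le 2$ there is nothing to prove: $\mathcal S_p\subseteq\mathcal S_2$, and $\mathcal S_2\subseteq\LH$ was recorded at the beginning of this section. So assume $p>2$ and fix $f\in\mathcal S_p$. Since $\LH$ is an ideal (Lemma~\ref{lem:isanideal}) and $f=u|f|$ with $u$ a partial isometry, it suffices to lift the positive diagonal operator $D=|f|=\sum_n s_n\,e_n\otimes e_n$; removing a finite--rank (hence Hilbert--Schmidt, hence liftable) summand we may also assume $0<s_n\le\tfrac12$ for all $n$, with $(s_n)\in\ell_p$. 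By Lemma~\ref{lem:lif-char}, what must be shown is that for every $\phi\in\QH$ the map $\phi\circ D$ lies at finite distance from a linear map on $\Hi$.

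First I would normalise $\phi$. Let $\ell_0$ be the (in general discontinuous) linear map with $\ell_0(e_n)=\phi(e_n)$; then $\phi-\ell_0$ has the same quasilinearity constant $Q:=Q(\phi)$, and $\phi\circ D$ differs from $(\phi-\ell_0)\circ D$ by the linear map $\ell_0\circ D$, so we may assume $\phi(e_n)=0$ for all $n$. Next I would decompose $\Hi$ according to the dyadic size of the singular numbers: put $A_k=\{n:2^{-k-1}<s_n\le 2^{-k}\}$ and $\Hi_k=\overline{\operatorname{span}}\{e_n:n\in A_k\}$, so that $\Hi=\bigoplus_{k\ge1}\Hi_k$ (orthogonal sum), $D=\bigoplus_k D_k$ with $\|D_k\|\le 2^{-k}$, and --- the arithmetical heart of the matter ---
$$\dim\Hi_k\ \le\ 2^{(k+1)p}\!\!\sum_{n\in A_k}\!\! s_n^p\quad\Longrightarrow\quad \log(1+\dim\Hi_k)\lesssim_p k\ \text{ and }\ \sum_k 2^{-k}\log(1+\dim\Hi_k)<\infty .$$

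The engine is the finite--dimensional triviality of quasilinear maps into a Hilbert space, with the \emph{sharp logarithmic} dependence on the dimension (a theorem of Kalton; see \cite{k} and \cite[Ch.~16]{bl}): a quasilinear $\psi$ from an $N$--dimensional Hilbert space into $\Hi$ that vanishes on an orthonormal basis satisfies $\|\psi\|\le CQ(\psi)\log(1+N)$, and, more generally, for a quasilinear $\phi$ into a Hilbert space one controls $\|\phi(\sum_i z_i)-\sum_i\phi(z_i)\|$ by $CQ(\phi)$ times $(\sum_i\|z_i\|^2)^{1/2}$ times a logarithmic factor measuring the spread of the $z_i$ --- the decisive point being that in a Hilbert space the Rademacher average $\mathbb E_\pm\|\sum_i\pm z_i\|$ is comparable to $(\sum_i\|z_i\|^2)^{1/2}$, so the plain number of summands never enters. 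Granting this, take $x=\sum_k x_k$ with $\|x\|\le 1$ and set $z_k=D_k x_k$; then $\|z_k\|\le 2^{-k}\|x_k\|\le 2^{-k}$, so $\sum_k\|z_k\|<1$ and $\sum_k\|z_k\|^2\le\tfrac14$. Splitting
$$\phi(Dx)\ =\ \Bigl(\phi\bigl(\textstyle\sum_k z_k\bigr)-\sum_k\phi(z_k)\Bigr)\ +\ \sum_k\phi(z_k),$$
I would bound the two summands separately and uniformly in $x$. The second is easy: $\|\phi(z_k)\|\le\|\phi|_{\Hi_k}\|\,\|z_k\|\lesssim_p Q\,k\,2^{-k}$ --- this is where $\phi(e_n)=0$ is used --- and $\sum_k k\,2^{-k}<\infty$. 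For the first, the geometric decay $\|z_k\|\le 2^{-k}$ makes the right--hand side of Kalton's estimate an absolute multiple of $Q$, so the cross--block defect is $O(Q)$. Hence $\|\phi\circ D\|<\infty$: $\phi\circ D$ is at finite distance from $0$, and, restoring the normalisation, from the linear map $\ell_0\circ D$. By Lemma~\ref{lem:lif-char}, $D$ is liftable, so $f$ is liftable, and $\mathcal S_p\subseteq\LH$.

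The genuinely delicate point is the cross--block estimate --- controlling $\|\phi(\sum_k z_k)-\sum_k\phi(z_k)\|$ for an \emph{infinite} unconditional decomposition. The naive telescoping bound $Q\sum_j\bigl(\|z_1+\dots+z_{j-1}\|+\|z_j\|\bigr)$ is useless here: the partial sums tend to $\sum_k z_k\neq 0$, so the series diverges; one really must invoke the type--$2$ refinement. (Equivalently: the restriction of $\phi\circ D$ to each $\Hi_k$ is the pullback of the given twisted Hilbert space along the finite--rank operator $D_k$, which therefore splits with constant $\lesssim 2^{-k}\log(1+\dim\Hi_k)$; the task is to show that these splittings glue because their constants are square--summable.) It is also here that the finiteness of $p$ really enters --- it forces $\dim\Hi_k$ to grow only exponentially in $k$, hence $2^{-k}\log(1+\dim\Hi_k)$ to be summable; for an arbitrary compact diagonal operator this fails, in keeping with $\LH$ being a proper ideal.
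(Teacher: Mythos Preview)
Your approach is genuinely different from the paper's: the paper invokes a ready-made ``module derivation'' $\Phi:\FH\to\FH$ from \cite{rmi} and shows $\|\Phi f-\phi\circ f\|\le 2C\|f\|_p$ for finite-rank $f$, then passes to $\mathcal S_p$ by writing $f=\sum_n f_n$ with $\|f_n\|_p\le 2^{-n}\|f\|_p$ and summing the liftings in the twisted sum. Your dyadic-block argument, combining the geometric decay of $\|D_k\|$ with the $\log(1+\dim\Hi_k)$ bound for $\phi|_{\Hi_k}$, is more elementary and transparent---but as written it has a real gap.

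The gap is the cross-block estimate for an \emph{infinite} family. After your normalisation $\phi(e_n)=0$ you claim $\|\phi(\sum_k z_k)-\sum_k\phi(z_k)\|\lesssim Q(\phi)$; this is false. Take any discontinuous linear functional $\ell$ on $\Hi$ with $\ell(e_n)=0$ (such exist by a Hamel-basis argument) and put $\psi(x)=\ell(x)e_1$. Then $\psi$ is linear, hence quasilinear with $Q(\psi)=0$, and $\psi(e_n)=0$; yet for suitable $z_k=c_ke_k$ one has $\ell(\sum_k z_k)\neq\sum_k\ell(z_k)$, so $\psi(\sum_k z_k)-\sum_k\psi(z_k)\neq 0$. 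Adding a large multiple of $\psi$ to any $\phi_0$ with $\phi_0(e_n)=0$ leaves $Q$ unchanged while making the cross-block defect as large as you like. The ``type-2 refinement'' you invoke is a Rademacher-\emph{average} estimate over sign choices; it does not give the pointwise bound you need, and no ``logarithmic spread'' version of Kalton's inequality covers infinite sums.

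The fix is cheap and keeps your strategy intact. For block-finitely-supported $x$ (equivalently, finitely supported, since each $\Hi_k$ is finite dimensional) your argument is correct: the backward telescoping $S_j=z_j+S_{j+1}$ terminates and gives
\[
\Big\|\phi\Big(\sum_{k\le N}z_k\Big)-\sum_{k\le N}\phi(z_k)\Big\|\le\sum_{j<N}Q\big(\|z_j\|+\|S_{j+1}\|\big)\lesssim Q\sum_j 2^{-j},
\]
so $\|\phi(Dx)\|\lesssim_p Q\|x\|$ on this dense subspace. Thus $x\mapsto(0,Dx)$ is a bounded linear map from the finitely supported vectors into the complete space $\Hi\oplus_\phi\Hi$; its continuous extension $F$ satisfies $\pi F=D$ and is the desired lifting. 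Note that $F$ need \emph{not} equal $(0,D)$ on all of $\Hi$ (in the counterexample above it does not), so you should not claim that $\phi\circ D$ itself is bounded---only that $D$ lifts.
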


\begin{proof}
We denote by $\FH$ the ideal of finite-rank operators on $\Hi$, using the subscript $p$ to indicate the norm in $\mathcal S_p$ for finite $p$.
The proof depends on the following result of \cite[Theorem 5.5(a)]{rmi}: 
For every $\phi\in\mathcal{Q}(\mathscr H)$ and every $0<p<\infty$ there is a homogeneous mapping $\Phi:  \FH\To\FH$ having the following properties:
\begin{itemize}
\item[(a)] If $x,y\in \mathscr H$, then $\|\Phi(y\otimes x)- y\otimes \phi(x)\|_p\leq C \|y\| \|x\|$.
\item[(b)] If $f$ has finite rank and $a\in \BH$, then $
\|\Phi(fa)-(\Phi f)a\|_p\leq C\|f\|_p\|a\|$
\end{itemize}
where $C$ depends only on $p$ and the quasilinearity constant of $\phi$.
The key point is that if $\phi$ and $\Phi$ are as before, then for every finite-rank $f$ the operator $\Phi(f)$ is a good approximation of the quasilinear map $\phi\circ f$:
Pick $x,y\in \mathscr H$ and consider the rank-one operator $y\otimes x$. Note that $f(y\otimes x)= y\otimes f(x)$ for any $f\in \BH$ and that the operator norm of $y\otimes x$ is $\|y\|  \|x\|$.
We have 
$$
\|\Phi( f(y\otimes x))- (\Phi f)(y\otimes x)\|_p\leq C\|f\|_p\|y\|\|x\|,
$$
by (b). But $(\Phi f)(y\otimes x)= y\otimes (\Phi f)(x)$ and 
$ \Phi( f(y\otimes x)) = \Phi(y\otimes f(x))$ so that 
$$
\| \Phi( f(y\otimes x))- y\otimes \phi(f(x))\|_p\leq C\|y\|\|f(x)\|,
$$
by (a). Combining,
$$
\| y\otimes (\Phi f)(x)- y\otimes \phi(f(x))\|_p=
\| y\| \|(\Phi f)(x)- y\otimes \phi(f(x))\|  \leq 
 C\|y\|\|f(x)\|+ C\|f\|_p\|y\|\|x\|
$$
Hence
$$
\|(\Phi f)(x)- \phi(f(x))\|\leq 
 2C\|f\|_p\|x\| \quad\implies \quad \|\Phi f-  \phi\circ f\|\leq 
 2C\|f\|_p.
 $$
Now assume one has  a twisted Hilbert space $\xymatrix{0\ar[r] & \mathscr K \ar[r]^\imath & \mathscr T \ar[r]^\pi & \mathscr H \ar[r] & 0}$\hspace{-4pt}.
%$\begin{CD}
%0@>>> \mathscr K @>\imath >> \mathscr T @>\pi>> \mathscr H @>>> 0.
%\end{CD}
%$ 
There is no loss of generality if we assume that $ \mathscr K= \mathscr H$ and that  $\mathscr T= \mathscr H\oplus_\phi \mathscr H$ for some $\phi\in \QH$. Fix $p<\infty$ and let $\Phi:  \FH\To\FH$ be as before, so that  $\|\Phi f-  \phi\circ f\|\leq 
 2C\|f\|_p$ for every $f\in\FH$. If $f\in \BH$ has finite rank, then the operator $(\Phi f,f):\mathscr H\To \mathscr H\oplus_\phi \mathscr H$ defined by  $(\Phi f,f)(x)=((\Phi f)(x),f(x))$ is a lifting of $f$. Besides,
 $$
 \|((\Phi f)(x),f(x))\|_\phi=
 \| (\Phi f)(x)-\phi(f(x))\|+\|\phi(f(x))\|_H\leq 3C\|f\|_p\|x\|,
 $$
 that is, $\|(\Phi f,f)\|\leq 3C\|f\|_p$.
Finally, for general $f\in \mathcal{S}_p$ we can write $f=\sum_{n\geq 0}f_n$ with $f_n\in\FH$ and $\|f_n\|_p\leq 2^{-n}\|f\|_p$. If $F_n$ is a lifting of $f_n$ with $\|F_n\|\leq M\|f_n\|_p$ then $F=\sum_{n\geq 0} F_n$ is a lifting of $f$, which completes the proof.

 Note that $\|F\|$ may depend not only on the norms of the summands, but also on the Banach-Mazur distance between  $\mathscr H\oplus_\phi \mathscr H$ and its Banach envelope which, in turn, depends only on $Q(\phi)$.
\end{proof}

It is clear that an operator belongs to {\em some} $\mathcal{S}_p$ if and only if its singular numbers are $O(n^{-\alpha})$ for {\em some} $\alpha>1$, which does not necessarily agree with $1/p$. 
An ideal that contains all the Schatten classes and is commonly regarded as reasonably small is the Macaev ideal $\mathcal{S}_\omega$ of those operators whose singular values satisfy $\sum_n s_n/n<\infty$, with the obvious norm, see \cite{Mac, Dav}. This ideal appears naturally in a surprisingly wide variety of situations, problems, results and applications, as a quick internet search reveals.

\begin{question}
Is every operator in the Macaev ideal liftable?
\end{question}

We do not know if $\LH$ can be given a {\em complete} ideal norm. Here, the question seems to be if each liftable operator is {\em uniformly} liftable in the following sense:

\begin{definition}\label{def:ul}
An operator $f\in B(\mathscr H)$ is said to be uniformly liftable if there is a constant $M$ such that for every quasilinear map $\phi\in\QH$ there is a linear endomorphism of $\Hi$ such that $\|\phi\circ f-L\|\leq M Q(\phi)$. 
Analogously, $f$ is said to be uniformly extensible if there is a constant $M$ such that for every quasilinear map $\phi\in\QH$ there is a linear endomorphism of $\Hi$ such that $\|f\circ \phi-L\|\leq M Q(\phi)$. 
\end{definition}

Let us denote by $\mathcal{L}_{\text{unif}}$ and $\mathcal{E}_{\text{unif}}$ the spaces of uniformly liftable and uniformly extensible operators on the ground Hilbert space $\Hi$, respectively.
These become normed ideals when equipped with
\begin{equation}\label{eq:L E}
\|f\|_\mathcal{L}=\|f\|+  \sup_{Q(\phi)\leq 1} \inf_L \|\phi\circ f-L\|\quad\text{and}\quad
\|f\|_\mathcal{E}=\|f\|+  \sup_{Q(\phi)\leq 1} \inf_L \|f\circ\phi-L\|.
\end{equation}
The proof of Proposition~\ref{prop:quitelarge} shows that $\mathcal{L}_{\text{unif}}$ contains every $\mathcal S_p$ and the inclusion is continuous. 
One has (we omit the proofs):
\begin{itemize}
\item $\mathcal{L}_{\text{unif}}=\mathcal{E}_{\text{unif}}$, with equivalent norms.
\item $\mathcal{L}_{\text{unif}}$ is complete.
\item The norms defined by (\ref{eq:L E}) do not vary if one considers bounded quasilinear maps and bounded linear maps only.
\end{itemize}
Actually one can ``eliminate'' the linear map from the definitions because of the existence of a universal constant $C$ such that for every  $\Phi\in \QH$ one has
$$
\inf_L \|\Phi-L\|\leq C\sup_{x_i\in H}\left\{\int_0^1\Big\| \Phi\big(\sum_{i\leq n}r_i(t)x_i \big)- \sum_{i\leq n}r_i(t)\Phi(x_i)\Big\|\, dt  : \sum_{i\leq n}\|x_i\|^2 \leq 1 	\right\}.
$$
An immediate consequence of the completeness of $\mathcal{L}_{\text{unif}}$ and Proposition~\ref{prop:quitelarge} is that $\mathcal{L}_{\text{unif}}$ is strictly larger than $\bigcup_{p<\infty}\mathcal S_p$.

\section{Diagonal operators and centralizers}
An idea that could work to show that liftable implies uniformly liftable is to find a single quasilinear map $\phi$ able to test liftability in the sense that $f$ is liftable if (and only if) $\|\phi\circ f-L\|<\infty$ for some linear map $L$. While we do not know if such a test exists, it turns out that this path is practicable when working with {\em centralizers} and {\em multiplication} operators, that is, moving from the linear category to the category of $\ell_\infty$-modules. To avoid unnecessary complications we will not consider $\ell_\infty$-modules explicitly and we give a simplified definition of the notion of centralizer that suffices to work with sequence spaces. From now on, if $X$ is a (quasi) Banach sequence space we denote by $X^0$ the (usually dense) subspace of finitely supported elements of $X$.

\begin{definition}
A centralizer on $X$ is a homogenenous mapping $\Phi:X^0\To X$ satisfying the estimate $\|\Phi(ax)-a\Phi(x)\|\leq C\|a\|_\infty\|x\|$ for some $C$ and all $a\in\ell_\infty, x\in X^0$.
\end{definition}
We denote by $C(\Phi)$ the least constant for which the inequality holds and call it the centralizer constant of $\Phi$. The space of all centralizers on $X$ is denoted by $\mathcal{C}(X)$.

Centralizers are automatically quasilinear maps and actually one has $Q(\Phi)\leq M C(\Phi)$, where $M$ is a constant depending only (on the modulus of concavity of) $X$; see \cite[Lemma 4.2]{kalcom} or \cite[Proposition 3.1]{k-jfa}. The notion of a centralizer was invented by  Kalton isolating a crucial property that most {\em derivations} appearing in interpolation theory share. If you feel curious about why centralizers are not defined on the whole space $X$, take a look at \cite[Corollary 2]{nla}.

Let us present some important examples of centralizers.
Let $x:\N\To\mathbb C$ be a sequence converging to zero. The rank-sequence of $x$ is defined as
$$r_x(n)=\big{|}\{k\in\N: \text{either } |x(k)|> |x(n)| \text{ or } |x(k)|= |x(n)| \text{ and } k\leq n\}\big{|},$$
that is, $r_x(n)$ is the place that $|x(n)|$ occupies in the decreasing rearrangement of $|x|$. If $\varphi: \R^2_+\To\mathbb C$ is a Lipschitz function vanishing at the origin, then the map $\Phi:\ell_p^0\To\ell_p$ defined by
\begin{equation}\label{eq:phi}
\Phi(x)=x\:\varphi\left(\log\frac{\|x\|_p}{|x|}, \log r_x\right)
\end{equation}%$$
is a centralizer; this is a specialization of \cite[Theorem 3.1]{kalcom}.
Actually these centralizers are {\em symmetric} in the sense that
$\Phi(x\circ\sigma)= \Phi(x)\circ\sigma$ when $\sigma$ is a permutation of the integers. Taking $\varphi(s,t)=s$ and $\varphi(s,t)=t$ one obtains the {\em Kalton-Peck} and {\em Kalton-alone} maps given by
\begin{equation}\label{eq:KP}
\Omega(x)= x\,\log\frac{\|x\|_p}{|x|} \qquad{\text{and}}\qquad \Gamma(x)= x\,\log r_x,
\end{equation}
respectively.
A {\em multiplication} operator on a sequence space $X$ is one of the form $x\longmapsto ax$ for some (necessarily bounded) sequence $a$. We denote by $a_{\bullet}$ the multiplication operator induced by $a$. Note that  $\| a_{\bullet}:X\To X\|=\|a\|_\infty$ and that multiplication operators leave $X^0$ invariant.

\begin{definition}
Let $0<p<\infty$.
A multiplication operator  $f$ on $\ell_p$ is liftable through centralizers if for every centralizer $\Phi:\ell_p^0\To \ell_p$ the restriction of  $f$ to $\ell_p^0$ has a lifting to $\ell_p\oplus_\Phi \ell_p^0$.
\end{definition}
The relevant picture is
$$
\xymatrixcolsep{4pc}
\xymatrixrowsep{2pc}
\xymatrix{
 &\ell_p\oplus_\Phi \ell_p^0 \ar[d]^\pi\\
  \ell_p^0 \ar[r]^{f} \ar[ur]^-F  & \ell_p^0 
}
$$
It should be clear that $f$ lifts through $\Phi$ if and only if there is a linear map $L:\ell_p^0\To \ell_p$ such that
$\|L-\Phi\circ f\|$ (equivalently, $\|L-f\circ\Phi\|$) is finite. In this case, and assuming $1\leq p<\infty$, the {\em new} linear map defined by
$$
\tilde{L}(x)=\int_{\Delta} v^{-1} L(vx)dv,
$$
where $dv$ denotes the Haar measure on the ``Cantor group'' $\Delta=\{\pm1\}^\mathbb N$ and the integral is taken in the Bochner sense, satisfies the same estimate as $L$ and commutes with every $v\in\Delta$ in the sense that $
\tilde{L}(vx)= v\tilde{L}(x)$. It quickly follows that $
\tilde{L}$ is implemented by some (perhaps unbounded) sequence $\lambda:\mathbb N\To \mathbb C$ in the sense that $L(x)=\lambda x$ for every $x\in\ell_p^0$. When $0<p<1$ one can simply take $\lambda(n)=(Le_n)(n)$.

We emphasize that we are really interested on this notion only for $p=2$.  
The reason for this sudden urge for generalization is:

\begin{lemma}Let $a$ be a bounded sequence. If the operator $a_\bullet:\ell_p\To \ell_p$ is liftable  through centralizers for some $0<p<\infty$, then so does for all $\,0<p<\infty$.
\end{lemma}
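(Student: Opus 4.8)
\medskip

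The plan is to reduce the statement to a \emph{change of exponent} performed by the coordinatewise substitution maps. Two reductions clear the ground. First, one may assume $a\geq 0$: writing $a=u\,|a|$ with $|u|\equiv 1$, the centralizer estimate $\|\Phi(u\,\cdot\,)-u_\bullet\Phi\|\leq C(\Phi)$ shows that $a_\bullet$ lifts through a centralizer $\Phi$ exactly when $|a|_\bullet$ does. Second, for a multiplication operator ``$a_\bullet$ lifts through $\Phi$'' means precisely that the map $\Phi_a:=\Phi(a\,\cdot\,)$ —- which is again a centralizer, with $C(\Phi_a)\leq 2\|a\|_\infty C(\Phi)$ —- lies at finite distance from a linear map, i.e.\ is \emph{trivial}; and that linear map may be taken to be a multiplier by the remark preceding this Lemma. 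A useful byproduct: for each $p$, the set $\mathcal I_p$ of multipliers that lift through every centralizer on $\ell_p$ is a permutation invariant order ideal of $\ell_\infty$, since $\|\Phi(a\,\cdot\,)-\lambda_\bullet\|<\infty$ and $b\in\ell_\infty$ give $\Phi(abx)=\lambda bx+O(\|bx\|)=(\lambda b)x+O(\|b\|_\infty\|x\|)$. In particular, after the harmless normalisation $\|a\|_\infty\leq 1$, one has $a\in\mathcal I_p\Rightarrow a^{s}\in\mathcal I_p$ for every $s\geq 1$.

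Now fix $0<p\leq q<\infty$ and consider the substitution $P=P_{p\to q}\colon x\longmapsto |x|^{p/q}\sign x$, an odd bijection of $\ell_p^0$ onto $\ell_q^0$ (and of $\ell_p$ onto $\ell_q$) with $Pe_n=e_n$, $\|Px\|_q=\|x\|_p^{p/q}$, inverse $P_{q\to p}$, and, crucially, $P(ax)=a^{p/q}\,Px$ for $a\geq 0$. Because $P$ and $P_{q\to p}$ carry multipliers to multipliers and bounded homogeneous maps to bounded homogeneous maps —- both by the scaling law for $\|P(\cdot)\|_q$ —- the conjugation $\Theta\mapsto P\Theta P_{q\to p}$ sends trivial centralizers to trivial ones and back; and because $p\leq q$ the scalar function $t\mapsto|t|^{p/q}\sign t$ is H\"older of exponent $p/q\leq 1$, so a short estimate shows that $\Psi:=P\Phi P_{q\to p}$ is a genuine centralizer on $\ell_q$ whenever $\Phi$ is one on $\ell_p$. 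Expanding $\Psi(a\,\cdot\,)$ and $\Psi(a^{p/q}\cdot)$ with the rule $P(ax)=a^{p/q}Px$ yields the intertwining identities
\[
\Psi(a\,\cdot\,)=P\,\Phi\!\big(a^{q/p}\cdot\big)\,P_{q\to p},\qquad\qquad \Psi\!\big(a^{p/q}\cdot\big)=P\,\Phi(a\,\cdot\,)\,P_{q\to p}.
\]
The first identity, together with ``transport preserves triviality'' and with $a\in\mathcal I_p\Rightarrow a^{q/p}\in\mathcal I_p$ (the exponent $q/p$ is $\geq 1$, so this is the order ideal property), shows that if $a\in\mathcal I_p$ then $a_\bullet$ lifts through every centralizer on $\ell_q$ of the form $P\Phi P_{q\to p}$.

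What remains —- and this is where the work lies —- is the assertion that \emph{every} centralizer on $\ell_q$ coincides, modulo a trivial summand (immaterial for liftability) and a nonzero scalar, with one of the form $P\Phi P_{q\to p}$; granting it, the previous paragraph gives $\mathcal I_p\subseteq\mathcal I_q$ for $p\leq q$, and the second identity is the lever for the converse. The obstacle to a purely formal proof is exactly that $P_{p\to q}$ is not linear, and for $p>q$ not even H\"older, so conjugating an \emph{unbounded} centralizer of $\ell_p$ by it need not land among the centralizers of $\ell_q$; consequently the ``order ideal'' bookkeeping alone only pushes $\mathcal I_p$ upward in $p$, not the equality asserted. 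I would remove the obstacle by invoking the structure of centralizers on $\ell_p$: each such centralizer is, up to a trivial summand and a scalar, of a canonical symmetric shape —- for instance $x\mapsto x\,\varphi(\log(\|x\|_p/|x|),\log r_x)$ with $\varphi$ Lipschitz, or more generally the derivation of a complex interpolation scale —- in which all dependence on the exponent is confined to the elementary rescaling $\log(\|x\|_p/|x|)=\log(\|x\|_q/|x|)+\log(\|x\|_p/\|x\|_q)$, a rescaling that the substitution $P_{p\to q}$ realises on the nose. The genuinely delicate points are thus (i) that an arbitrary centralizer admits this canonical reduction and (ii) that the leftover rescaling alters the centralizer only by a trivial summand; once these are secured, the change of exponent above forces $\mathcal I_p=\mathcal I_q$, which is the Lemma.
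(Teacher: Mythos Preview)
Your reduction to $a\geq 0$ and the observation that $\mathcal I_p$ is an $\ell_\infty$-ideal are fine, and the Mazur-type substitution $P_{p\to q}$ is a natural idea. But the argument has a genuine gap at exactly the point you flag, and your proposed patch does not close it.

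The claim that ``each centralizer is, up to a trivial summand and a scalar, of the canonical symmetric shape $x\,\varphi(\log(\|x\|_p/|x|),\log r_x)$'' is not true: that formula produces only the \emph{symmetric} centralizers of Kalton's theorem, and there are plenty of non-symmetric centralizers on $\ell_p$ (any bounded multiplier $\lambda_\bullet$ with non-constant $\lambda$ is one). So the surjectivity of your transport cannot be obtained this way. The paper uses a \emph{different} transport, Kalton's shift $\Phi_q(x)=u|x|^{q/s}\Phi(|x|^{q/p})$ with $q^{-1}=p^{-1}+s^{-1}$, and the surjectivity of \emph{that} map (up to strong equivalence) is a theorem quoted from \cite{c}; it is not a formality.

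There is also a second problem you glide over. Even granting surjectivity, your exponent bookkeeping is one-sided: from $a\in\mathcal I_q$ and $\Psi=P\Phi P_{q\to p}$ you deduce only that $\Phi_{a^{q/p}}$ is trivial, i.e.\ $a^{q/p}\in\mathcal I_p$; since $q/p\geq 1$ this is \emph{weaker} than $a\in\mathcal I_p$, and the ideal property goes the wrong way. (Relatedly, the ``and back'' in ``sends trivial to trivial and back'' is not obvious: $P_{q\to p}$ has exponent $q/p\geq 1$, hence is not globally H\"older, and the perturbation estimate you need involves the possibly unbounded multiplier.) The paper sidesteps this asymmetry entirely: it shows directly, via H\"older's inequality and the extra estimate $\|\Phi_q(xy)-x\Phi(y)\|_q\leq M\|x\|_s\|y\|_p$, that $a_\bullet$ lifts through $\Phi$ on $\ell_p$ if and only if it lifts through $\Phi_q$ on $\ell_q$, with the \emph{same} witness sequence $\lambda$. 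Combined with the cited surjectivity this gives both inclusions at once.
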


\begin{proof}
This is a consequence of the fact that centralizers can be ``shifted'' from one $\ell_p$ to any other. Precisely, if $0<q<p<\infty$ and $\Phi:\ell_p^0\To\ell_p$ is a centralizer, then the map $\Phi_q:\ell_q^0\To\ell_q$ defined by 
$\Phi_q(x)= u|x|^{q/s}\Phi( |x|^{q/p})$, where $q^{-1}=p^{-1}+s^{-1}$ and $f=u|x|$ is the polar decomposition, is a centralizer on $\ell_q$; {\em cf.} \cite[Lemma 5]{c}. Moreover, every centralizer on $\ell_q$ arises in this way, up to strong equivalence \cite[Corollary 3]{c}. Here, two homogeneous maps are said to be strongly equivalent
if their difference is bounded.

It therefore suffices to see that, given $a\in\ell_\infty, 0<q<p<\infty$ and a centralizer $\Phi:\ell_p^0\To\ell_p$ the multiplication operator induced by $a$ on $\ell_p^0$ lifts through $\Phi$ if and only if the corresponding operator on $\ell_q$  lifts through $\Phi_q$, with the same witness sequence.

($\Longrightarrow$) Let $\lambda$ be a sequence witnessing that  $a_\bullet$ lifts through $\Phi$, so that
$
\|\lambda x-a\Phi(x)\|_p\leq K\|x\|_p
$ for all $x\in \ell_p^0$.
Given a finitely supported $x\geq 0$ we can write $x=x^{q/s} x^{q/p}$, with $\|x\|_q= \|x^{q/s}\|_s \|x^{q/p}\|_p$ and, by H\"older inequality,
$$
\|\lambda x-a\Phi_q(x)\|_q= 
\|\lambda x^{q/s} x^{q/p} -ax^{q/s} \Phi(x^{q/p})\|_q\leq 
\| x^{q/s} \|_s \|\lambda x^{q/p} -a \Phi(x^{q/p})\|_p \leq K\|x\|_q.
$$

($\Longleftarrow$) We need the following additional property of $\Phi_q$: there is a constant $M$ so that for finitely supported $x,y$ one has $\|\Phi_q(xy)-x\Phi(y)\|_q\leq M\|x\|_s\|y\|_p$; see \cite[Proof of Lemma 5(b)]{c}. 
Now, if we assume $\|\lambda z-a\Phi(z)\|_q\leq M'\|z\|_q$ then for any finitely supported $x,y$ one has
$$
\|\lambda xy-ax\Phi(y)\|_q\leq M''\|x\|_s\|y\|_p
$$
and since $\|g\|_p=\sup_{\|f\|_s\leq 1}\|fg\|_q$ we are done.
\end{proof}

Experience dictates that, more often than not, the most efficient way to prove {\em things} is to connect them with some result of Nigel Kalton.
 A minimal extension of $\ell_1$ is an exact sequence of quasi Banach spaces
\begin{equation}\label{eq:minimal}
\begin{CD}
0@>>> \mathbb K @>\imath >> Z @>\pi>> \ell_1 @>>> 0
\end{CD}
\end{equation} 
Note that such a sequence splits if and only if $Z$ is (isomorphic to) a Banach space, so says the Hahn-Banach extension theorem. See \cite{rib,rob} and \cite[Section 4]{k} for the classical nontrivial examples.

In \cite[Section~8]{kaltconv}, Kalton studies multiplication operators on $\ell_1$ that lift to its minimal extensions and characterize them by the property that $d^\star_n\log n$ is bounded, where $d_n^\star$ is the decreasing rearrangement of the corresponding sequence, {\em cf.} \cite[Theorem 8.3]{kaltconv}. It turns out that these operators are exactly the liftable through centralizers:%  this class agrees with that of  XXX

\begin{proposition}
For a bounded sequence $d$ the following are equivalent:
\begin{itemize}
\item[(a)] $d_\bullet:\ell_2\To\ell_2$ liftable through centralizers.
\item[(b)] $d_\bullet:\ell_2\To\ell_2$ liftable through $\Omega$ or $\Gamma$; see $(\ref{eq:KP})$.
\item[(c)] $d^\star_n\log n$ is bounded.
\item[(d)] $d_\bullet:\ell_1\To\ell_1$  lifts to minimal extensions.
\end{itemize}
\end{proposition}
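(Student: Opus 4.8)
The plan is to prove the cycle $(d)\Rightarrow(c)\Rightarrow(b)\Rightarrow(a)\Rightarrow(d)$, using the machinery already in place: the shifting lemma to pass freely between $\ell_1$ and $\ell_2$, Kalton's Theorem 8.3 of \cite{kaltconv} for the equivalence with $(d)$, and the averaging argument (the $\tilde L$ trick) to reduce any lifting to a multiplication operator $\lambda_\bullet$. The equivalence $(c)\Leftrightarrow(d)$ is \emph{literally} \cite[Theorem 8.3]{kaltconv}, so the real content is relating liftability through centralizers on $\ell_2$ to these conditions; the fact that $(a)\Rightarrow(b)$ is trivial (a particular case of the quantifier), so the engine is $(b)\Rightarrow(c)$ on one side and $(c)\Rightarrow(a)$ on the other.

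First I would treat $(c)\Rightarrow(a)$. Given $d$ with $d^\star_n\log n$ bounded and an arbitrary centralizer $\Phi$ on $\ell_2^0$, I must produce a sequence $\lambda$ with $\|\lambda x - d\Phi(x)\|_2\leq K\|x\|_2$. By the shifting lemma it is equivalent to lift $d_\bullet$ on $\ell_1$ through the centralizer $\Phi_1$ on $\ell_1^0$ obtained by pushing $\Phi$ down; and since every centralizer on $\ell_1$ is strongly equivalent to one arising this way, a lifting of $d_\bullet$ to \emph{every} centralizer-generated minimal extension of $\ell_1$ is exactly what $(d)$ provides. So the substance here is: a minimal extension of $\ell_1$ generated by a centralizer is, up to the bounded perturbation allowed by strong equivalence, the same as an exact sequence $0\to\mathbb K\to Z\to\ell_1\to 0$ of the type Kalton considers, and Theorem 8.3 gives the lift precisely when $(c)$ holds. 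One must check that Kalton's minimal extensions are indeed (equivalent to) centralizer extensions — this is the standard fact that derivations obtained from interpolation between $\ell_1$ and $\ell_\infty$-type endpoints are centralizers, referenced already via \cite{kalcom, k-jfa} — and that the witness sequence transfers through the shifting construction, which the proof of the shifting Lemma records ("with the same witness sequence").

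Next, $(b)\Rightarrow(c)$. Here I would run the contrapositive with a concrete choice of $\Phi$, namely the Kalton--Peck map $\Omega$ or the Kalton-alone map $\Gamma$ of $(\ref{eq:KP})$, and exhibit, when $d^\star_n\log n$ is unbounded, a finitely supported sequence $x$ (supported on a block of size $n$, on which the top singular values of $d$ live) for which no $\lambda$ can satisfy $\|\lambda x - d\Omega(x)\|_2\leq K\|x\|_2$ uniformly. For the Kalton-alone map this is the cleanest: on a vector $x$ equal to a constant on $n$ coordinates, $\Gamma(x)=x\log r_x$ has entries $x\log 1,\dots,x\log n$, so $d\Gamma(x)$ has entries roughly $d^\star_k (\log k)\,x$, and the obstruction to absorbing this into a \emph{single} multiplier $\lambda$ is that $\lambda$ would have to behave like $d^\star_k\log k$ on that block while being simultaneously consistent across translated blocks — the same rigidity Kalton exploits. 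Making this uniform-in-$K$ contradiction precise is the step I expect to be the main obstacle: one has to choose the test vectors (possibly a direct sum of shifted blocks, as in the Kalton--Peck analysis) carefully so that a putative liftability constant $K$ forces $d^\star_n\log n\leq$ const, and handle the two cases $\Omega$ and $\Gamma$ — the $\Omega$ case requires a logarithmic change of variables in the coordinates to convert "$\log(\|x\|/|x|)$" into the "$\log r_x$" profile, which is exactly the content of the symmetry/equivalence of these two centralizers modulo bounded maps alluded to after $(\ref{eq:phi})$.

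Finally I would close the loop with the easy implications: $(a)\Rightarrow(b)$ is immediate since $\Omega,\Gamma\in\mathcal C(\ell_2)$, and $(c)\Leftrightarrow(d)$ is quoted from \cite[Theorem 8.3]{kaltconv}. A remark worth inserting: once $(b)$ holds for \emph{one} of $\Omega,\Gamma$ it holds for the other and for all the centralizers $(\ref{eq:phi})$ with Lipschitz symbol $\varphi$, because any such $\Phi$ differs from a combination of $\Omega$ and $\Gamma$ by a bounded map (Lipschitz $\varphi$ composed with the two logarithmic "coordinates"), so liftability through it is insensitive to that bounded perturbation — this is what makes the leap from the two distinguished maps in $(b)$ to the universal quantifier in $(a)$ so cheap, modulo the shifting lemma that already reduced everything to $\ell_1$.
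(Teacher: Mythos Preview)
Your cycle matches the paper's in spirit, but there is a genuine gap in your $(c)\Rightarrow(a)$ step. A centralizer $\Phi_1$ on $\ell_1$ takes values in $\ell_1$, not in $\mathbb K$, so the extension $0\to\ell_1\to\ell_1\oplus_{\Phi_1}\ell_1^0\to\ell_1\to 0$ is \emph{not} a minimal extension, and condition $(d)$ says nothing about it directly. Your sentence ``Kalton's minimal extensions are indeed (equivalent to) centralizer extensions'' has the direction reversed and conflates two different categories of objects. The paper (which runs $(d)\Rightarrow(a)$) bridges this by the following device: given $\Phi\in\mathcal C(\ell_2)$, push it down to $\Phi_1\in\mathcal C(\ell_1)$ and then compose with the single functional $\langle 1_{\mathbb N},\cdot\rangle$ to obtain a scalar-valued quasilinear map $\phi(x)=\langle 1_{\mathbb N},\Phi_1(x)\rangle$ on $\ell_1^0$; \emph{this} generates a minimal extension. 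Condition $(d)$ then supplies a linear $L:\ell_1^0\to\mathbb K$ with $|L(x)-\phi(dx)|\leq M\|x\|_1$; setting $\lambda(n)=L(e_n)$ one has $|\langle 1_{\mathbb N},\lambda x-\Phi_1(dx)\rangle|\leq M\|x\|_1$, and replacing $x$ by $ax$ for arbitrary $a\in\ell_\infty$ together with the centralizer inequality $\|a\Phi_1(dx)-\Phi_1(adx)\|_1\leq C\|a\|_\infty\|dx\|_1$ upgrades this to $|\langle a,\lambda x-\Phi_1(dx)\rangle|\leq M'\|a\|_\infty\|x\|_1$, i.e.\ $\|\lambda x-\Phi_1(dx)\|_1\leq M'\|x\|_1$. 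This ``collapse to a scalar, then expand via the module action'' is the idea you are missing; the shifting lemma alone does not supply it.

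Your $(b)\Rightarrow(c)$ is headed the right way but vaguer than necessary, and your own flag (``the main obstacle'') is accurate. The paper's argument for $\Omega$ is short and avoids blocks entirely: since $\Omega(de_i)=0$ for every $i$, any witness sequence $\lambda$ is forced to be bounded, hence $x\mapsto\Omega(dx)$ itself must be bounded on $\ell_2^0$; the single family of test vectors $s_n=\sum_{i\leq n}d_i^{-1}e_i$ (chosen so that $ds_n=\sum_{i\leq n}e_i$ is flat) then gives $\|\Omega(ds_n)\|_2^2/\|s_n\|_2^2 = n(\log n)^2\big/\big(4\sum_{i\leq n}d_i^{-2}\big)\to\infty$ once one passes to a subsequence with $d_n\log n\to\infty$. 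No translated blocks, no logarithmic change of variables, and no comparison between $\Omega$ and $\Gamma$ is needed.
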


\begin{proof} 
We follow the string (a)$\implies$(b)$\implies$(c)$\implies$(d)$\implies$ (a). The first implication is trivial and the third one, by far the most difficult one, is contained in Kalton's result just mentioned.

(b)$\implies$(c)\: We write the proof for the Kalton-Peck centralizer $\Omega$. The proof for $\Gamma$ is easier.
 Assume $d$ is a decreasing sequence whose multiplication operator lifts through $\Omega$. If $d_n\log n$ is unbounded, then passing to a subsequence we may assume that $d_n\log n\To \infty$. We have $\Omega(de_i)=0$ and so every sequence witnessing that $d_\bullet$ lifts through $\Omega$ has to be bounded. It follows that $x\longmapsto \Omega(dx)$ is bounded on $\ell_2^0$ and this leads to a contradiction: For each $n\in\N$ consider the vector $s_n=\sum_{i\leq n}d_i^{-1}e_i$. One has
$$
\|s_n\|_2^2=\sum_{i\leq n} d_i^{-2},\,\text{ while }\,\|\Omega(ds_n)\|_2^2= \frac{n\log^2 n}{4} \quad \implies\quad \frac{\|\Omega(ds_n)\|_2^2}{\|s_n\|_2^2}=\frac{n\log^2 n}{4 \sum_{i\leq n} d_i^{-2}}\To \infty.
$$

(d)$\implies$(a)\: Take $\Phi\in\mathscr C(\ell_2)$ and define $\phi:\ell_1^0\To\K$ by
$$
\phi(x)=\langle 1_{\N}, u|x|^{1/2}\Phi \big( |x|^{1/2} \big)\rangle=
\langle 1_{\N}, \Phi_1 (x) \rangle,
$$
where $x=u|x|$ is the polar decomposition. This map is quasilinear since $\Phi_1$ is a centralizer. It is very easy to see that if $d_\bullet:\ell_1\To\ell_1$ lifts to the minimal extension $\mathbb K\oplus_\phi\ell_1^0$, then it lifts through $\Phi_1$ as well and, therefore, the corresponding operator on $\ell_2$ lifts through $\Phi$. Indeed, if $L:\ell_1\To\K$ is a linear map such that $\|L-\phi\circ d_\bullet\|<\infty$ and we consider the sequence $\lambda:\N\To\K$ defined by $\lambda(n)=L(e_n)$ so that $L(x)=\langle 1_{\N}, \lambda x \rangle$, one has
$$
|\langle 1_{\N}, \lambda x- \Phi_1 (dx) \rangle|= |L(x)-\phi(dx)|\leq M\|x\|_1\,\implies\,  |\langle 1_{\N}, \lambda  a x - \Phi_1 (da x) \rangle|
\leq M\|a\|_\infty \|x\|_1
$$
for every $a\in\ell_\infty$. By the centralizer property of $\Phi_1$ one also has
$$
|\langle a , \lambda x- \Phi_1 (dx) \rangle|= 
|\langle 1_{\N}, \lambda  a x- a \Phi_1 (dx) \rangle| \leq M'\|a\|_\infty \|x\|_1 \,\implies\,  \| \lambda x- \Phi_1 (dx)\|\leq M'\|x\|_1,
$$
that is, $\lambda$ witness that $d_\bullet$ lifts through $\Phi_1$ and the preceding  lemma applies.
\end{proof}

While the fact that $\Omega$ liftability implies liftability for all centralizers is hardly surprising ($\Omega$ is widely regarded as an ``extreme'' centralizer) the fact that $\Gamma$ shares this property is somewhat unexpected since it was not supposed to be so radical.
Actually the equivalence between $\Omega$-liftability and $\Gamma$-liftability
is not longer true for arbitrary operators: indeed, if $(h_n)$ is a disjoint normalized sequence in $\ell_2$ such that $\|h_n\|_\infty\To 0$, then the endomorphism sending $e_n$ to $h_n$ lifts from $\ell_2$ to $\ell_2\oplus_\Gamma\ell_2^0$  (by \cite[Proposition 2(b)]{factorization}, where $\Gamma$ is denoted by $\Upsilon$), but not to $\ell_2\oplus_\Omega\ell_2^0$ (by \cite[Theorem 6.4]{kp}).

Statement (c) shows that the multiplication operators liftable through centralizers form a Banach space (actually a Lorentz sequence space) under the norm $\|d\|=\sup_n d_n^\star\log(n+1)$.

We may summarize our results as follows: a sufficient condition for an operator to be liftable is that its singular numbers are $O(1/n^{\alpha})$ for some $\alpha>1$ and a necessary condition is that they are $O(1/\log n)$.

\medskip

 The content of this note naturally flows into the following:

\begin{question}
Is every operator whose singular numbers are bounded by $1/\log n$ liftable?
\end{question}

\end{document}